\newtheorem{theorem}{Theorem}
\newtheorem{cor}[theorem]{Corollary}
\newtheorem{prop}[theorem]{Proposition}
\newtheorem{definition}[theorem]{Definition}
\newtheorem{ex}[theorem]{Example}
\newcommand{\val}{{\rm val\,}}
\newcommand{\lpk}{{\rm lpk\,}}
\newcommand{\pk}{{\rm pk\,}}
\newcommand{\rpk}{{\rm rpk\,}}
\newcommand{\nsi}{{\rm nsingleton\,}}
\newcommand{\des}{{\rm des\,}}
\newcommand{\dudes}{{\rm dudes\,}}
\newcommand{\inv}{{\rm inv\,}}
\newcommand{\lrm}{{\rm lrm\,}}
\newcommand{\maj}{{\rm maj\,}}
\newcommand{\exddes}{{\rm exddes\,}}
\newcommand{\singleton}{{\rm singleton\,}}
\newcommand{\cyc}{{\rm cyc\,}}
\newcommand{\block}{{\rm block\,}}
\newcommand{\exc}{{\rm exc\,}}
\newcommand{\msm}{\mathfrak{S}_m}
\newcommand{\msn}{\mathfrak{S}_n}
\newcommand{\ass}{\mathcal{AS}}
\newcommand{\bss}{\mathcal{BS}}
\newcommand{\css}{\mathcal{CS}}
\newcommand{\spn}{\mathcal{SP}}
\newcommand{\ms}{\mathcal{S}}
\newcommand{\fr}{{\rm fr\,}}
\newcommand{\as}{{\rm as\,}}
\newcommand{\asc}{{\rm asc\,}}
\newcommand{\rs}{\mathcal{RS}}
\newcommand{\lrf}[1]{\lfloor #1\rfloor}
\newcommand{\Eulerian}[2]{\genfrac{<}{>}{0pt}{}{#1}{#2}}
\newcommand{\Stirling}[2]{\genfrac{\{}{\}}{0pt}{}{#1}{#2}}
\newcommand{\arxiv}[1]{\href{http://arxiv.org/abs/#1}{\texttt{arXiv:#1}}}
\title{Simsun permutations, simsun successions and simsun patterns}
\author[S.-M.~Ma]{Shi-Mei Ma}
\address{School of Mathematics and Statistics,
        Northeastern University at Qinhuangdao,
         Hebei 066004, P.R. China}
\email{shimeimapapers@163.com (S.-M. Ma)}
\author[Y.-N. Yeh]{Yeong-Nan Yeh}
\address{Institute of Mathematics,
        Academia Sinica, Taipei, Taiwan}
\email{mayeh@math.sinica.edu.tw (Y.-N. Yeh)}
\subjclass[2010]{Primary 05A05; Secondary 05A19}
\begin{document}

\maketitle
\begin{abstract}
In this paper, we introduce the definitions of simsun succession, simsun cycle succession and simsun pattern.
In particular, the ordinary simsun permutations are permutations avoiding simsun pattern 321.
We study the descent and peak statistics on permutations avoiding simsun successions.
We give a combinatorial interpretation of the $q$-Eulerian polynomials introduced by Brenti (J. Combin. Theory Ser. A 91 (2000), 137--170). We also present a bijection between permutations avoiding simsun pattern $132$ and set partitions.
\bigskip

\noindent{\sl Keywords}: Simsun patterns; Simsun successions; Simsun cycle successions; Descents
\end{abstract}
\date{\today}
\section{Introduction}
Let $\msn$ denote the symmetric group of all permutations of $[n]$, where $[n]=\{1,2,\ldots,n\}$.
Let $\pi=\pi(1)\pi(2)\cdots \pi(n)\in\msn$.
A {\it descent} in $\pi$ is an index $i$ such that $\pi(i)>\pi(i+1)$.
We say that $\pi$ has no {\it double descents} if there is no
index $i\in [n-2]$ such that $\pi(i)>\pi(i+1)>\pi(i+2)$.
Simsun permutation is defined by Sundaram and Simion~\cite{Sundaram1994}.
A permutation $\pi\in\msn$ is called {\it simsun} if for all $k$, the
subword of $\pi$ restricted to $[k]$ (in the order
they appear in $\pi$) contains no double descents. For example,
$35142$ is simsun, but $35241$ is not.
Let $\rs_n$ be the set of simsun permutations of length $n$.
Simion and Sundaram~\cite[p.~267]{Sundaram1994} discovered that
$\#\rs_n=E_{n+1}$, where $E_n$ is the $n$th Euler number, which also is the number
alternating permutations in $\msn$ (see~\cite{Stanley} for instance).
Simsun permutation is a variant of Andr\'e permutation, which was introduced by Foata and Sch\"utzenberge~\cite{Foata73}.
There has been much work related to simsun permutations~(see~\cite{Branden11,Chow11,Deutsch12,Ehrenborg98,Eu14,Foata01,Hetyei98,Ma16} for instance).

Let $\des(\pi)$ be the number of descents of $\pi$.
Let $$S_n(x)=\sum_{\pi\in\rs_n}x^{\des(\pi)}=\sum_{k=0}^{\lrf{n/2}}S(n,k)x^k.$$
Following~\cite[p.~267]{Sundaram1994}, the numbers $S(n,k)$ satisfy the recurrence relation
\begin{equation*}\label{Snk-recurrence}
S(n,k)=(k+1)S(n-1,k)+(n-2k+1)S(n-1,k-1),
\end{equation*}
with the initial conditions $S(0,0)=1$ and $S(0,k)=0$ for $k\ge 1$, which is equivalent to
\begin{equation*}\label{Snx-recu}
S_{n+1}(x)=(1+nx)S_n(x)+x(1-2x)S_n'(x),
\end{equation*}
with $S_0(x)=1$.
Let $D_n(x)=\sum_{k\geq 1}d(n,k)x^k$,
where $d(n,k)$ denote the number of augmented Andr\'e permutations of order $n$ with $k-1$ left peaks (see~\cite{Foata01}).
It follows from~\cite[Proposition~4]{Chow11} that
$D_{n+1}(x)=xS_{n}(x)$ for $n\ge 0$.
For each $\pi\in\msn$, we say that $\pi$ has an {\it excedance} at $i$ if $\pi(i)>i$.
Let $\exc(\pi)$ be the excedance number of $\pi$. The {\it Eulerian polynomials} are defined by
$$A_n(x)=\sum_{\pi\in\msn}x^{\des(\pi)+1}=\sum_{\pi\in\msn}x^{\exc(\pi)+1}\quad\textrm{for $n\ge 1$}.$$
From~\cite[Proposition~2.7]{Foata73}, we have
\begin{equation*}\label{AnxSnk}
A_{n+1}(x)=x\sum_{k=0}^{\lrf{n/2}}S(n,k)(2x)^k(1+x)^{n-2k}.
\end{equation*}

A {\it succession} in $\pi\in\msn$ is an index $i\in [n-1]$ such that $\pi(i+1)=\pi(i)+1$.
The study of successions began in
1940s (see~\cite{Kaplansky43,Riordan45}), and there has been much recent activity.
There are various variants of succession, including circular succession~\cite{Tanny76} and cycle succession~\cite{Mansour16}.
The succession statistic has also been studied on various combinatorial structures, such as set partitions~\cite{Mansour14,Munagi08}, compositions and words~\cite{Knopfmacher12}. For example, a succession in a partition of $[n]$ is an occurrence of two consecutive integers appear in the same block.
Following~\cite[p.~137, Exercise 108]{Sta11}, the number of partitions of $[n]$ with no successions is $B(n-1)$, where $B(n)$ is the $n$th {\it Bell number}, which also is the number of partitions of $[n]$.

This paper is organized as follows. In Section~\ref{Section-2}, we consider a subset
of $\rs_n$ with no successions. In Section~\ref{Section-3}, we introduce the definition of simsun succession.
In Section~\ref{Section-4}, we give a combinatorial interpretation of the $q$-Eulerian polynomials introduced by Brenti~\cite{Bre00}.
In Section~\ref{Section-5}, we present a bijection between permutations avoiding simsun pattern $132$ and set partitions.
\section{A subset of $\rs_n$ with no successions}\label{Section-2}
\hspace*{\parindent}
\begin{definition}
Let $\bss_n$ denote the set of permutations in $\rs_n$ such that for all $k$, the
subword of $\pi$ restricted to $[k]$ (in the order
they appear in $\pi$) does not contain successions.
\end{definition}
For example, $\bss_5=\{25143,21435,24135,24153,52413\}$.
Now we present the first main result of this paper.
\begin{theorem}
For $n\geq 0$ and $0\leq k\leq \lrf{n/2}$, we have
\begin{equation}
S(n,k)=\#\{\pi\in\bss_{n+2}: \des(\pi)=k+1\}.
\end{equation}
Equivalently, we have $$S_n(x)=\sum_{\pi\in\bss_{n+2}}x^{\des(\pi)-1}.$$
\end{theorem}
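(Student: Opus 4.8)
The plan is to prove that the numbers $T(n,k):=\#\{\pi\in\bss_{n+2}:\des(\pi)=k+1\}$ satisfy the same recurrence and initial data as $S(n,k)$; then $T(n,k)=S(n,k)$ follows by induction on $n$, and the displayed polynomial identity is immediate since $\sum_{\pi\in\bss_{n+2}}x^{\des(\pi)-1}=\sum_k T(n,k)x^k$. The mechanism is insertion and deletion of the largest letter. Deleting the maximal entry $m$ from $\sigma\in\bss_m$ leaves a permutation $\pi$ of $[m-1]$ whose restriction to each $[k]$ with $k\le m-1$ is identical to that of $\sigma$; consequently $\pi$ still has no double descent and no succession in any restriction, so $\pi\in\bss_{m-1}$, and conversely $\sigma$ is recovered by inserting $m$ back into $\pi$. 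The decisive simplification is that, because $m$ is the largest letter, inserting it alters only the restriction to $[m]$, namely the whole word, while every restriction to a smaller set is untouched; hence admissibility of a slot reduces to two local conditions on the full word, that it create no double descent and no succession.

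First I would treat the simsun condition in isolation. Write $L=m-1$ for the length of $\pi$ and $d=\des(\pi)$. Since $m$ is maximal, inserting it produces an ascent on its left and, unless it is placed last, a descent on its right, so the only double descent it can create is of the form $m>\pi_{i+1}>\pi_{i+2}$; this rules out precisely the $d$ slots sitting immediately before a descent of $\pi$. Among the surviving slots, those at the end or in one of the $d$ descent gaps leave $\des$ unchanged, giving $d+1$ descent-preserving slots, whereas those at the front or in an ascent gap raise $\des$ by one. The front together with the ascent gaps number $L-d$ in all, and the $d$ forbidden slots are exactly of this second kind (a descent gap cannot lie immediately before a descent, as $\pi$ has no double descent), which leaves $L-2d$ descent-increasing slots. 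This recovers the simsun insertion underlying the recurrence for $S(n,k)$.

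The delicate step, and the genuinely new ingredient here, is the succession bookkeeping, and the point is that it is very clean. Inserting $m$ can create a succession only through the adjacency $(m-1)\,m$, that is, only in the single slot placing $m$ directly after the letter $m-1$, and this slot is always descent-preserving: if $m-1$ is the last letter of $\pi$ it is the end slot, and otherwise $m-1$ is immediately followed by a smaller letter and so heads a descent, making it a descent gap. In either case it is one of the $d+1$ descent-preserving simsun slots and none of the $L-2d$ descent-increasing ones. Forbidding successions therefore deletes exactly one descent-preserving slot and no descent-increasing slot, so $\pi$ admits precisely $d$ descent-preserving and $L-2d=(n+1)-2d$ descent-increasing admissible insertions.

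Summing over $\pi\in\bss_{n+1}$ and sorting each $\sigma\in\bss_{n+2}$ by its source $\pi$ then finishes the recurrence: a $\sigma$ with $\des(\sigma)=k+1$ either comes from a $\pi$ with $\des(\pi)=k+1$ through one of its $k+1$ descent-preserving slots, or from a $\pi$ with $\des(\pi)=k$ through one of its $(n+1)-2k$ descent-increasing slots, whence
\[
T(n,k)=(k+1)\,T(n-1,k)+(n-2k+1)\,T(n-1,k-1)\qquad(n\ge 1),
\]
which is exactly the recurrence quoted for $S(n,k)$. It remains to match the initial data, which I would check directly: $\bss_2=\{21\}$, since $12$ has the succession $12$, so $T(0,0)=1$ and $T(0,k)=0$ for $k\ge 1$, agreeing with $S(0,0)=1$ and $S(0,k)=0$. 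The induction then gives $T(n,k)=S(n,k)$, completing the proof. I expect the main obstacle to be not the simsun count, which is standard, but the verification in the third paragraph that the succession constraint subtracts exactly one slot and that this slot is always descent-preserving, uniformly in whether $m-1$ is internal or terminal.
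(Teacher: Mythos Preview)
Your proposal is correct and follows essentially the same strategy as the paper: both arguments insert the largest letter into a permutation of $\bss_{n+1}$, verify that the simsun and no-succession constraints localize to the full word, count admissible slots by their effect on $\des$, and conclude by matching the recurrence $T(n,k)=(k+1)T(n-1,k)+(n-2k+1)T(n-1,k-1)$ together with the base case $\bss_2=\{21\}$. Your treatment is somewhat more uniform---you observe once that the unique succession-creating slot (right after $m-1$) is always a descent-preserving slot, whereas the paper splits into the two subcases $\sigma(n+1)=n+1$ and $\sigma(n+1)<n+1$ to reach the same count---but this is a difference in packaging, not in substance.
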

\begin{proof}
Let $r(n,k)=\#\{\pi\in\bss_{n+2}: \des(\pi)=k+1\}$.
There are two ways in which a permutation $\pi\in\bss_{n+2}$ with $k+1$ descents
can be obtained from a permutation $\sigma\in\bss_{n+1}$:
\begin{enumerate}
\item [(a)] If $\des(\sigma)=k+1$, then we distinguish two cases:
\begin{enumerate}
\item [($c_1$)] If $\sigma(n+1)=n+1$, then we can insert $n+2$ right after $\sigma(i)$, where $i$ is a descent index;
\item [($c_2$)] If $\sigma(n+1)<n+1$, then there exists an index $j\in [n]$ such that $\sigma(j)=n+1$. We can insert $n+2$ right after $\sigma(i)$, where $i$ is a descent index and $i\neq j$. Moreover, we can also put $n+2$ at the end of $\sigma$.
\end{enumerate}
In either case, we have $k+1$ choices for the positions of $n+2$. As we have $r(n-1,k)$ choices for $\sigma$. This gives $(k+1)r(n-1,k)$ possibilities.
\item [(b)] If $\des(\sigma)=k$, then we
can not insert $n+2$ immediately before or right after a decent index. Moreover, we can not put $n+2$ at the end of $\sigma$.
 Therefore, the entry $n+2$ can be inserted into any of the remaining $n-2k+1$ positions. As we have $r(n-1,k-1)$ choices for $\sigma$. This gives $(n-2k+1)r(n-1,k-1)$ possibilities.
\end{enumerate}
Therefore, $r(n,k)=(k+1)r(n-1,k)+(n-2k+1)r(n-1,k-1)$.
Note that $\bss_{2}=\{21\}$. Thus $r(0,0)=1$ and $r(0,k)=0$ for $k\geq 1$.
Hence the numbers $r(n,k)$ satisfy the same recurrence
and initial conditions as $S(n,k)$, so they agree.
\end{proof}

\begin{cor}
For $n\geq 1$, we have $\#\bss_n=E_{n-1}$.
\end{cor}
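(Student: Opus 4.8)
The plan is to obtain the corollary as an immediate specialization of the Theorem at $x=1$. The equivalent form of the Theorem states that
$$S_n(x)=\sum_{\pi\in\bss_{n+2}}x^{\des(\pi)-1}$$
for $n\geq 0$. Setting $x=1$ collapses the right-hand side, since each term $1^{\des(\pi)-1}$ equals $1$, giving $S_n(1)=\#\bss_{n+2}$. On the left, by definition $S_n(x)=\sum_{\pi\in\rs_n}x^{\des(\pi)}$, so $S_n(1)=\#\rs_n$. Invoking the enumeration $\#\rs_n=E_{n+1}$ recorded in the introduction (due to Simion and Sundaram), I then conclude $\#\bss_{n+2}=E_{n+1}$ for all $n\geq 0$.

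Reindexing by $m=n+2$ turns this into $\#\bss_m=E_{m-1}$ for every $m\geq 2$, which is exactly the claimed identity $\#\bss_n=E_{n-1}$ in the range $n\geq 2$. The only value not covered by this substitution is $n=1$, since recovering it would require $n+2=1$. I would dispose of this base case by direct inspection: the unique permutation of length one is the identity, so $\bss_1=\{1\}$ and hence $\#\bss_1=1=E_0$, matching the formula. Combining the two ranges yields the corollary for all $n\geq 1$.

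I do not anticipate any genuine obstacle here, as the content is entirely carried by the Theorem; the corollary is a one-line consequence of evaluating a descent-generating identity at $x=1$ together with the known count of simsun permutations. The only point requiring a word of care is the indexing of the Euler numbers and the separate treatment of the smallest case $n=1$, which falls outside the range $n+2\geq 2$ produced by the Theorem's substitution.
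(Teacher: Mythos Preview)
Your argument is correct and is precisely the intended derivation: the paper states this as an immediate corollary with no proof, and your specialization $x=1$ in the identity $S_n(x)=\sum_{\pi\in\bss_{n+2}}x^{\des(\pi)-1}$ together with $\#\rs_n=E_{n+1}$ is exactly how one reads it off. Your separate verification of the boundary case $n=1$ via $\bss_1=\{1\}$ and $E_0=1$ is a nice bit of care that the paper omits.
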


\section{The descent and peak statistics and simsun successions}\label{Section-3}
\hspace*{\parindent}
The number of peaks of permutations is certainly among the most important combinatorial statistics.
See, e.g.,~\cite{Dilks09,Ma122,Ma1303,Zeng12} and the references therein. We now recall some basic definitions.
An {\it interior peak} in $\pi$ is an index $i\in\{2,3,\ldots,n-1\}$
such that $\pi(i-1)<\pi(i)>\pi(i+1)$.
A {\it left peak} in $\pi\in\msn$ is an index $i\in[n-1]$ such that $\pi(i-1)<\pi(i)>\pi(i+1)$, where we take $\pi(0)=0$.
Let $\pk(\pi)$ (resp. $\lpk(\pi)$) be the number of interior peaks (resp. left peaks) of $\pi$.
Similarly, a {\it valley} in $\pi$ is an index $i\in\{2,3,\ldots,n-1\}$
such that $\pi(i-1)>\pi(i)<\pi(i+1)$. Let $\val(\pi)$ be the number of valleys of $\pi$.
Clearly, interior peak and valley are equidistributed over $\msn$.
Motivated by the study of longest increasing subsequences,
Stanley~\cite{Sta08} initiated a study of the longest alternating subsequences.
An {\it alternating subsequence} of $\pi\in\msn$ is a subsequence $\pi(i_1),\pi(i_2),\ldots,\pi(i_k)$ satisfying
$\pi(i_1)>\pi(i_2)<\pi(i_3)>\cdots\pi(i_k)$,
where $i_1<i_2<\cdots <i_k$.
Let $\as(\pi)$ be the length (number of terms) of the longest alternating subsequence of a permutation $\pi\in\msn$.

Define
\begin{align*}
\Eulerian{n}{k}&=\#\{\pi\in\msn: \des(\pi)=k-1\};\\
P(n,k)&=\#\{\pi\in\msn: \pk(\pi)=k\};\\
\widehat{P}(n,k)&=\#\{\pi\in\msn: \lpk(\pi)=k\};\\
T(n,k)&=\#\{\pi\in\msn: \as(\pi)=k\}.
\end{align*}
It is well known that these numbers satisfy the following recurrences (see~\cite{Ma1303,Sta08} for instance):
\begin{align*}
\Eulerian{n}{k}&=k\Eulerian{n-1}{k}+(n-k+1)\Eulerian{n-1}{k-1};\\
P(n,k)&=(2k+2)P(n-1,k)+(n-2k)P(n-1,k-1);\\
\widehat{P}(n,k)&=(2k+1)\widehat{P}(n-1,k)+(n-2k+1)\widehat{P}(n-1,k-1);\\
T(n,k)&=kT(n-1,k)+T(n-1,k-1)+(n-k+1)T(n-1,k-2).
\end{align*}

\begin{definition}
We say that $\pi$ avoids {\it simsun succession} if for all $k$, the
subword of $\pi$ restricted to $[k]$ (in the order
they appear in $\pi$) does not contain successions.
\end{definition}

For example, the permutation $\pi=321465$ contains a simsun succession, since $\pi$ restricted to $[5]$ equals
$32145$ and it contains a succession.
Let $\ass_n$ denote the set of permutations in $\msn$ that avoid simsun successions.
In particular, $\ass_1=\{1\},\ass_2=\{21\}$ and $\ass_3=\{213,321\}$.

Now we present the second main result of this paper.
\begin{theorem}
For $n\geq 2$, we have
\begin{equation}\label{des-ass}
\sum_{\sigma\in\msn}x^{\des(\sigma)+1}=\sum_{\pi\in\ass_{n+1}}x^{\des(\pi)};
\end{equation}
$$\sum_{\pi\in\msn}x^{\pk(\pi)+1}=\sum_{\pi\in\ass_{n+1}}x^{\lpk(\pi)};$$
$$\sum_{\pi\in\msn}x^{\lpk(\pi)}=\sum_{\pi\in\ass_{n+1}}x^{\val(\pi)};$$
$$\sum_{\pi\in\msn}x^{\as(\pi)+1}=\sum_{\pi\in\ass_{n+1}}x^{\as(\pi)}.$$
\end{theorem}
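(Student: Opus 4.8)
The plan is to prove all four identities by a single device: a recursive construction of $\ass_m$ by insertion of the largest letter, followed by a recurrence-matching argument. First I would establish the structural lemma that every $\pi\in\ass_m$ arises uniquely from some $\sigma\in\ass_{m-1}$ by inserting the letter $m$ into one of the $m-1$ slots of $\sigma$ that do \emph{not} lie immediately after the letter $m-1$. Indeed, for $k\le m-1$ the subword of the enlarged permutation restricted to $[k]$ coincides with that of $\sigma$ and is therefore succession-free, while in the full word the only succession that inserting the maximal letter $m$ can create is the pair $(m-1,m)$; so the single forbidden slot is the one right after $m-1$. In particular $\#\ass_m=(m-1)\,\#\ass_{m-1}$, whence $\#\ass_{n+1}=n!=\#\msn$, the size check underlying the four equalities.

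For the descent identity~(\ref{des-ass}) I would track how $\des$ changes under this insertion. Since $m$ is maximal, inserting it at the front or into an ascent of $\sigma$ raises $\des$ by one, while inserting it into a descent or at the very end leaves $\des$ unchanged; crucially, the forbidden slot (right after the maximal letter $m-1$ of $\sigma$) is always of the descent-preserving type. Writing $b(m,k)=\#\{\pi\in\ass_m:\des(\pi)=k\}$, a count of the surviving slots gives $b(m,k)=k\,b(m-1,k)+(m-k)\,b(m-1,k-1)$, which under the substitution $m=n+1$ is exactly the Eulerian recurrence $\Eulerian{n}{k}=k\Eulerian{n-1}{k}+(n-k+1)\Eulerian{n-1}{k-1}$; together with the base case $\ass_2=\{21\}$ this yields $\#\{\pi\in\ass_{n+1}:\des(\pi)=k\}=\Eulerian{n}{k}$, i.e. the first equality.

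The two peak identities are handled the same way, replacing $\des$ by $\lpk$ and by $\val$ and classifying the allowed insertions according to whether they fix the statistic or raise it by one. For left peaks I expect the split to be $2j$ insertions fixing $\lpk=j$ against $m-1-2j$ raising it, giving $\ell(m,k)=2k\,\ell(m-1,k)+(m+1-2k)\,\ell(m-1,k-1)$, which matches the peak recurrence $P(n,k)=(2k+2)P(n-1,k)+(n-2k)P(n-1,k-1)$ under $\ell(m,k)=P(m-1,k-1)$; an analogous bookkeeping for valleys should reproduce the recurrence for $\widehat P(n,k)$. In each case the delicate point is locating the forbidden slot among the fixing and raising slots and treating the boundary positions $\pi(0)=0$ and the rightmost slot correctly; small checks such as $\ass_3=\{213,321\}$ and the transition to $\ass_4$ would be used to pin down the index shift and confirm the classification.

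The main obstacle is the longest-alternating-subsequence identity, because $\as$ is a global statistic rather than a local count and its recurrence $T(n,k)=kT(n-1,k)+T(n-1,k-1)+(n-k+1)T(n-1,k-2)$ has three terms. Here inserting the maximal letter $m$ creates a new local maximum that can lengthen the longest alternating subsequence by $0$, $1$, or $2$, so the allowed slots must be partitioned into three classes whose sizes, as functions of $\as(\sigma)$, must reproduce the three coefficients $k$, $1$, $n-k+1$. Establishing that the forbidden slot always falls in the correct class, and that no insertion can increase $\as$ by more than two, is the technically demanding step; once this trichotomy is proved, the recurrence matching and the base-case verification proceed as before and complete the proof.
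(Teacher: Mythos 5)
Your proposal is correct and follows essentially the same route as the paper: insert the largest entry into $\sigma\in\ass_{m-1}$, observe that the unique forbidden slot is the one immediately after $m-1$ and that it always lies among the statistic-preserving slots, and match the resulting recurrence (your $b(m,k)=k\,b(m-1,k)+(m-k)\,b(m-1,k-1)$ is exactly the paper's $a(n+1,k)=ka(n,k)+(n-k+2)a(n,k-1)$ after reindexing). The paper likewise proves only the descent identity in full and asserts the other three are "similar," so your explicit (and correct) recurrence for $\lpk$ and your outline of the trichotomy needed for $\as$ actually go slightly beyond what the paper writes down.
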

\begin{proof}
We only prove~\eqref{des-ass} and the others can be proved in a similar way.
Let $a(n,k)=\#\{\pi\in\ass_{n+1}: \des(\pi)=k\}$. There are two ways in which a permutation $\pi\in\ass_{n+2}$ with $k$ descents
can be obtained from a permutation $\sigma\in\ass_{n+1}$.
\begin{enumerate}
\item [(a)] If $\des(\sigma)=k$, then we distinguish two cases:
\begin{enumerate}
\item [($c_1$)] If $\sigma(n+1)=n+1$, then we can insert $n+2$ right after $\sigma(i)$, where $i$ is a descent index;
\item [($c_2$)] If $\sigma(n+1)<n+1$, then there exists an index $j\in [n]$ such that $\sigma(j)=n+1$. We can insert $n+2$ right after $\sigma(i)$, where $i$ is a descent index and $i\neq j$. Moreover, we can also insert $n+2$ at the end of $\sigma$.
\end{enumerate}
In either case, we have $k$ choices for the positions of $n+2$. As we have $a(n,k)$ choices for $\sigma$. This gives $ka(n,k)$ possibilities.
\item [(b)] If $\des(\sigma)=k-1$, we can not insert $n+2$ right after any decent index and we can not put $n+2$ at the end of $\sigma$.
Hence the entry $n+2$ can be inserted into any of the remaining $n-k+2$ positions. As we have $a(n,k-1)$ choices for $\sigma$. This gives $(n-k+2)a(n,k-1)$ possibilities.
\end{enumerate}
   Therefore, $a(n+1,k)=ka(n,k)+(n-k+2)a(n,k-1)$.
Note that $a(1,1)=1$ and $a(1,k)=0$ for $k\geq 2$.
Hence the numbers $a(n,k)$ satisfy the same recurrence
and initial conditions as $\Eulerian{n}{k}$, so they agree.
\end{proof}
\section{$q$-Eulerian polynomials and simsun cycle successions}\label{Section-4}
\hspace*{\parindent}
Recall that $\pi\in\msn$ can be written in {\it standard
cycle form}, where each cycle is written with its smallest entry first and the
cycles are written in increasing order of their smallest entry.
In this section, we always write $\pi$
in standard cycle form.
A {\it cycle succession} in $\pi$ is an occurrence of
two consecutive entries $i,i+1$ in that order within the same cycle for some
$i\in [n-1]$ (see~\cite{Mansour16}). For example, the permutation $(1,2,6)(3,5,4)$ contains a cycle succession.

\begin{definition}
We say that $\pi$ avoids {\it simsun cycle succession} if for all $k$, the
subword of $\pi$ restricted to $[k]$ (in the order
they appear in $\pi$) does not contain cycle successions.
\end{definition}

For example, $\pi=(1543)(2)$ avoids simsun cycle succession.
Let $\css_n$ be the set of permutations in $\msn$ that avoid simsun cycle successions.
In particular, $\css_1=\{(1)\},\css_2=\{(1)(2)\}$ and $\css_3=\{(1)(2)(3),(13)(2)\}$.

In~\cite{Bre00}, Brenti considered a $q$-analog of the classical
Eulerian polynomials defined by
$$A_0(x;q)=1,\quad A_n(x;q)=\sum_{\pi\in\msn}x^{\exc(\pi)}q^{\cyc(\pi)}\quad\textrm{for $n\ge 1$},$$
where $\cyc(\pi)$ is the number of cycles in $\pi$. The first few of
the $q$-Eulerian polynomials are
$$A_0(x;q)=1, A_1(x;q)=q, A_2(x;q)=q(x+q),
A_3(x;q)=q[x^2+(3q+1)x+q^2].$$ Clearly,
$A_n(x)=xA_n(x;1)$ for $n\ge 1$. The real-rootedness of $A_n(x;q)$ has been studied in~\cite{Branden06,Ma08}.

The third main result of this paper is the following.
\begin{theorem}
For $n\geq 1$, we have
\begin{equation}\label{Anxq-succession}
\sum_{\pi\in\msn}x^{\exc(\pi)}q^{\cyc(\pi)+1}=\sum_{\sigma\in\css_{n+1}}x^{\exc(\sigma)}q^{\cyc(\sigma)}.
\end{equation}
\end{theorem}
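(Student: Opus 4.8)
The plan is to mirror the method of the previous two theorems and show that the two sides of~\eqref{Anxq-succession} obey the same recurrence in $n$, together with a common base case. Write $L_n(x;q)=\sum_{\pi\in\msn}x^{\exc(\pi)}q^{\cyc(\pi)+1}=qA_n(x;q)$ for the left-hand side and $R_n(x;q)=\sum_{\sigma\in\css_{n+1}}x^{\exc(\sigma)}q^{\cyc(\sigma)}$ for the right-hand side; the goal is $L_n=R_n$ for $n\ge 1$. Throughout I would insert the largest letter into the standard cycle form, using the fact that the operator $x\,\partial/\partial x$ marks the excedance statistic.

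For the left-hand side I would first record the classical recurrence for $A_n(x;q)$. Given $\sigma\in\mathfrak{S}_{n-1}$ in standard cycle form, the letter $n$ can be adjoined either as a new fixed point, which multiplies the weight by $q$ and leaves $\exc$ unchanged, or spliced into a cycle immediately after some letter $a$, replacing the arc $a\to\sigma(a)$ by $a\to n\to\sigma(a)$. In the second case $a$ becomes an excedance while $n$ is not one, so $\exc$ increases by $1$ exactly when $a$ was not already an excedance of $\sigma$; since position $n$ is never an excedance, there are $\exc(\sigma)$ splice positions that fix $\exc$ and $(n-1)-\exc(\sigma)$ that raise it. Summing these contributions gives
\begin{equation*}
A_n(x;q)=\bigl(q+(n-1)x\bigr)A_{n-1}(x;q)+x(1-x)A_{n-1}'(x;q),
\end{equation*}
and multiplying through by $q$ shows that $L_n$ satisfies the very same recurrence.

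For the right-hand side I would analyze how a permutation $\pi\in\css_{n+1}$ with prescribed $\exc$ and $\cyc$ arises from $\sigma\in\css_n$ by inserting $n+1$. Because $n+1$ is larger than every other letter, the subword of $\pi$ restricted to $[k]$ coincides with that of $\sigma$ for every $k\le n$, so those restrictions automatically avoid cycle successions; the only new constraint is that the full permutation $\pi$ must avoid a cycle succession. The crucial observation is that inserting $n+1$ can create an arc of the form $i\to i+1$ only as $n\to n+1$, so the insertion is admissible precisely when $n+1$ is not placed immediately after $n$. Thus exactly one of the $n$ possible splice positions, namely $a=n$, is forbidden, and $a=n$ is always a non-excedance. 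Hence there remain $\exc(\sigma)$ admissible positions that keep $\exc$ fixed and $(n-1)-\exc(\sigma)$ that raise it by $1$; carrying out the same summation yields
\begin{equation*}
C_{n+1}(x;q)=\bigl(q+(n-1)x\bigr)C_n(x;q)+x(1-x)C_n'(x;q),
\end{equation*}
where $C_m(x;q)=\sum_{\sigma\in\css_m}x^{\exc(\sigma)}q^{\cyc(\sigma)}=R_{m-1}$, which is exactly the recurrence satisfied by $L_n$.

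Finally I would check the base case $n=1$: the left-hand side equals $qA_1(x;q)=q^2$, while $\css_2=\{(1)(2)\}$ gives $R_1=q^2$, so the two sequences agree. The main obstacle is the combinatorial claim in the third paragraph — that placing $n+1$ immediately after $n$ is the unique forbidden splice. Establishing it cleanly requires observing that deleting the letters exceeding $k$ never manufactures a new succession among the surviving letters, and that the only adjacency of consecutive integers that insertion of $n+1$ can introduce is $n\to n+1$. The pleasant payoff is that this single forbidden position is automatically a non-excedance, which is precisely what lowers the linear coefficient from $n$ to $n-1$ and thereby converts the recurrence for $A_{n+1}(x;q)$ into the one for $qA_n(x;q)$.
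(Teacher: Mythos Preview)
Your argument is correct. The key combinatorial claim---that inserting $n+1$ into $\sigma\in\css_n$ is admissible for every position except immediately after the letter $n$, and that this forbidden position is always a non-excedance---is exactly right, and the recurrence $(q+(n-1)x)C_n+x(1-x)C_n'$ follows as you say.

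The paper, however, does not argue by matching recurrences here. Instead it builds an explicit bijection $\Phi:\ms_{n,k,\ell}\to\css_{n+1,k,\ell+1}$ by induction on $n$, using a labelling scheme: in $\pi\in\ms_{n,k,\ell}$ the $k$ excedance positions are labelled $u_1,\ldots,u_k$ and the remaining $n-k$ positions $v_1,\ldots,v_{n-k}$, while in $\sigma\in\css_{n+1,k,\ell+1}$ the $k$ excedances are labelled $p_1,\ldots,p_k$ and the remaining positions \emph{except the one right after $n+1$} are labelled $q_1,\ldots,q_{n-k}$. The bijection then sends ``insert $m+1$ at label $u_r$ (resp.\ $v_j$, resp.\ as a new cycle)'' to ``insert $m+2$ at label $p_r$ (resp.\ $q_j$, resp.\ as a new cycle)''. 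Your recurrence is precisely the generating-function shadow of this label-matching: the omission of the slot after the largest letter on the $\css$ side is what drops the coefficient from $n$ to $n-1$. So the two proofs rest on the same observation, but the paper extracts an explicit bijection from it, whereas your approach is shorter and parallels the style of the proofs of the earlier theorems on $\bss_n$ and $\ass_n$. The paper's version buys an actual map $\Phi$ one can compute on individual permutations; yours buys brevity.
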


Let $\ms_{n,k,\ell}=\{\pi\in\msn: \exc(\pi)=k, \cyc(\pi)=\ell\}$ and
$\css_{n,k,\ell}=\{\pi\in\css_n: \exc(\pi)=k, \cyc(\pi)=\ell\}$.
In the rest of this section, we give a constructive proof of~\eqref{Anxq-succession}.
We now introduce two definitions of labeled permutations.
\begin{definition}
Suppose $\pi\in \ms_{n,k,\ell}$ and $i_1<i_2<\cdots<i_k$ are the excedances of $\pi$. Then we put superscript label $u_r$
right after $i_{r}$, where $1\leq r\leq k$.
In the remaining positions
except the first position of each cycle, we put superscript labels $v_1,v_2,\ldots,v_{n-k}$ from left to right.
\end{definition}

\begin{definition}
Suppose $\pi\in \css_{n,k,\ell}$ and $i_1<i_2<\cdots<i_k$ are the excedances of $\pi$.
Then we put superscript label $p_r$ right after $i_{r}$, where $1\leq r\leq k$.
In the remaining positions except the first position of each cycle and the position right after the entry $n$,
we put superscript labels $q_1,q_2,\ldots,q_{n-k-1}$ from left to right.
\end{definition}

In the following discussion, we always add labels to permutations in $\ms_{n,k,\ell}$ and $\css_{n,k,\ell}$.
As an example, for $\pi=(135)(26)(4)$, if we say that $\pi\in \ms_{6,3,3}$, then the labels of $\pi$
is given by $(1^{u_1}3^{u_2}5^{v_1})(2^{u_3}6^{v_2})(4^{v_3})$; if we say that $\pi\in\css_{6,3,3}$, then the labels of $\pi$
is given by $(1^{p_1}3^{p_2}5^{q_1})(2^{p_3}6)(4^{q_2})$.

Now we start to construct a bijection, denoted by $\Phi$, between $\ms_{n,k,\ell}$ and $\css_{n+1,k,\ell+1}$.
When $n=1$, we have $\ms_{1,0,1}=\{(1)\}$ and $\css_{2,0,2}=\{(1)(2)\}$.
Set $\Phi((1))=(1)(2)$. This gives a bijection between $\ms_{1,0,1}$ and $\css_{2,0,2}$. Let $n=m$.
Suppose $\Phi$ is a bijection between $\ms_{m,k,\ell}$ and $\css_{m+1,k,\ell+1}$ for all $k$ and $\ell$.
Consider the case $n=m+1$. Given $\pi\in \ms_{m,k,\ell}$. Assume $\Phi(\pi)=\sigma$. Then $\sigma\in \css_{m+1,k,\ell+1}$. Consider the
following three cases:
\begin{enumerate}
  \item [\rm ($i$)] If $\widehat{\pi}$ is obtained from $\pi$ by inserting the entry $m+1$ to the position of $\pi$ with label $u_r$,
   then we insert $m+2$ to the position of $\sigma$ with label $p_r$. In this case, $\exc(\widehat{\pi})=\exc(\Phi(\widehat{\pi}))=k$ and $\cyc(\widehat{\pi})+1=\cyc(\Phi(\widehat{\pi}))=\ell+1$.
 \item [\rm ($ii$)] If $\widehat{\pi}$ is obtained from $\pi$ by inserting the entry $m+1$ to the position of $\pi$ with label $v_j$,
  then we insert $m+2$ to the position of $\sigma$ with label $q_j$. In this case, $\exc(\widehat{\pi})=\exc(\Phi(\widehat{\pi}))=k+1$ and $\cyc(\widehat{\pi})+1=\cyc(\Phi(\widehat{\pi}))=\ell+1$.
 \item [\rm ($iii$)] If $\widehat{\pi}$ is obtained from $\pi$ by appending $(m+1)$ to $\pi$ as a new cycle,
 then we append $(m+2)$ to $\sigma$ as a new cycle. In this case, $\exc(\widehat{\pi})=\exc(\Phi(\widehat{\pi}))=k$ and $\cyc(\widehat{\pi})+1=\cyc(\Phi(\widehat{\pi}))=\ell+2$.
\end{enumerate}
By induction, we see that $\Phi$ is the desired bijection between $\ms_{n,k,\ell}$ and $\css_{n+1,k,\ell+1}$ for all $k$ and $\ell$,
which also gives a constructive proof of~\eqref{Anxq-succession}.

\begin{ex}
Given $\pi=(135)(2)(4)\in\ms_{5,2,3}$.
The correspondence between $\pi$ and $\Phi(\pi)$ is built up as follows:
\begin{align*}
(1^{v_1})&\Leftrightarrow (1^{q_1})(2);\\
(1^{v_1})(2^{v_2})&\Leftrightarrow (1^{q_1})(2^{q_2})(3);\\
(1^{u_1}3^{v_1})(2^{v_2})&\Leftrightarrow (1^{p_1}4)(2^{q_1})(3^{q_2});\\
(1^{u_1}3^{v_1})(2^{v_2})(4^{v_3})&\Leftrightarrow (1^{p_1}4^{q_1})(2^{q_2})(3^{q_3})(5);\\
(1^{u_1}3^{u_2}5^{v_1})(2^{v_2})(4^{v_3})&\Leftrightarrow (1^{p_1}4^{p_2}6)(2^{q_1})(3^{q_2})(5^{q_3}).
\end{align*}
\end{ex}
\section{Permutations avoiding the simsun pattern $132$ and set partitions}\label{Section-5}
\hspace*{\parindent}
In this section, containment and avoidance will always refer to
consecutive patterns.
Let $m,n$ be two positive integers with $m\leq n$, and let $\pi\in\msn$ and $\tau\in\msm$.
We say that $\pi$ contains $\tau$ as a {\it consecutive pattern} if it has a
subsequence of adjacent entries order-isomorphic to $\tau$.
A permutation $\pi$ avoids a pattern $\tau$ if $\pi$ does not contain $\tau$.
\begin{definition}
Let $\pi\in\msn$ and $\tau\in\msm$.
We say that $\pi$ {\it avoids simsun pattern $\tau$} if for all $k$, the
subword of $\pi$ restricted to $[k]$ (in the order
they appear in $\pi$) does not contain the consecutive pattern $\tau$.
\end{definition}

Let $\spn_n(\tau)$ denote the set of permutations in $\msn$ that avoid simsun pattern $\tau$.
In particular, $\spn_n(321)=\rs_n$. Using the reverse map, we get
$\#\spn_n(321)=\#\spn_n(123)=E_{n+1}$. In the following, we shall establish a connection between
$\spn_n(132)$ and set partitions of $[n]$.

A {\it partition} $p$ of $[n]$ is a collection of nonempty disjoint subsets, called blocks, whose union is $[n]$.
As usual, we always write $p\vdash[n]$ and put $p=B_1/B_2/\cdots/B_k$ in standard form
with $\min B_1 <\min B_2 <\cdots<\min B_k$. It is well known that the number of partitions of $[n]$ with exactly
$k$ blocks is the {\it Stirling number of the second kind} $\Stirling{n}{k}$.
Let $\prod_n=\{p: p\vdash [n]\}$ and
let $\block(p)$ be the number of blocks of $p$.
Let $p=B_1/B_2/\cdots/B_k$ be a partition of $[n]$. For $c\in B_s$ and $d\in B_t$, we say that the pair
$(c,d)$ is a {\it free rise} of $p$ if $c<d$, where $1\leq s<t\leq k$.
Let $\fr(p)$ be the number of free rises of $p$. For example, $\fr(\{1,2,3\}/\{4\}/\{5\})=7$.
A {\it singleton} of a partition is a block with exactly one element (see~\cite{Sun11} for instance).
Let $\singleton(p)$ be the number of singletons of $p$.
We say that a block of $p$ is {\it non-singleton} if it contains at least two elements.
Let $\nsi(p)$ be the number of non-singletons of $p$. For example,
$\singleton(\{1,2,3\}/\{4\}/\{5\})=2$ and $\nsi(\{1,2,3\}/\{4\}/\{5\})=1$.

Let $\pi=\pi(1)\pi(2)\cdots\pi(n)\in\msn$. A {\it right peak} of $\pi$ is an entry $\pi(i)$ with
$i\in\{2,3,\ldots,n\}$ such that $\pi(i-1)<\pi(i)>\pi(i+1)$, where we take $\pi(n+1)=0$.
Let $\rpk(\pi)$ be the number of right peaks of $\pi$.
An {\it inversion} of $\pi$ is a pair $(\pi(i),\pi(j))$ such that $i<j$ and $\pi(i)>\pi(j)$.
Let $\inv(\pi)$ be the number of inversions of $\pi$.
We say that the entry $\pi(i)$ is an {\it exterior double descent} of $\pi$ if $\pi(i-1)>\pi(i)>\pi(i+1)$, where $i\in [n]$ and we take
$\pi(0)=+\infty$ and $\pi(n+1)=0$. Let $\exddes(\pi)$ be the number of exterior double descents of $\pi$.
For example, $\rpk(42315)=2, \inv(42315)=5$ and $\exddes(42315)=1$.

Now we present the fourth main result of this paper.
\begin{theorem}\label{thm04}
For $n\geq 1$, we have
\begin{equation}\label{Stirling132}
\sum_{\pi\in \spn_n(132)}x^{\des(\pi)+1}y^{\rpk(\pi)}z^{\exddes(\pi)}q^{\inv(\pi)}=\sum_{p\in \prod_{n}}x^{\block(p)}y^{\nsi(p)}z^{\singleton(p)}q^{\fr(p)}.
\end{equation}
\end{theorem}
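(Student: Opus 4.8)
The plan is to construct an explicit bijection $\Phi\colon \spn_n(132)\to\prod_n$ together with its inverse, and then read off all four statistics from the correspondence. The guiding observation is that $\des(\pi)+1$ equals the number of maximal increasing (ascending) runs of $\pi$, so we should match the ascending runs of $\pi$ with the blocks of a partition. Concretely, I would define $\Phi(\pi)$ to be the partition of $[n]$ whose blocks are exactly the value-sets of the ascending runs of $\pi$. For the candidate inverse $\Psi$, given $p=B_1/\cdots/B_b$ in standard form, I arrange each block in increasing order and concatenate these increasing words in the order $B_b,B_{b-1},\ldots,B_1$, that is, in decreasing order of block minima.

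First I would verify that $\Psi(p)$ really lies in $\spn_n(132)$. Since $\min B_i\le k$ forces $\min(B_i\cap[k])=\min B_i$, the subword of $\Psi(p)$ on $[k]$ is precisely $\Psi\bigl(p\,|_{[k]}\bigr)$, the analogous word for the partition induced on $[k]$; thus it suffices to prove the top-level claim that for every partition $q$ the word $\Psi(q)$ avoids the consecutive pattern $132$. Here the key step is short: a consecutive $132$ must straddle a block boundary, so its middle (largest) entry is the last entry $\max C$ of one block and its final entry is the first entry $\min C'$ of the following block $C'$; but the decreasing-minima ordering gives $\min C>\min C'$, so the initial entry of the pattern, lying in $C$, is at least $\min C>\min C'$, contradicting that it should be the smallest of the three. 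This is the heart of the argument and the step I expect to require the most care to state cleanly.

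Next I would check that $\Phi$ and $\Psi$ are mutually inverse. The identity $\Phi\circ\Psi=\mathrm{id}$ is immediate once one notes that $\max B_i>\min B_{i-1}$ guarantees a genuine descent between consecutive blocks of $\Psi(p)$, so the runs of $\Psi(p)$ are exactly the blocks. For $\Psi\circ\Phi=\mathrm{id}$ I must show that for $\pi\in\spn_n(132)$ the run minima strictly decrease from left to right. I would argue by contradiction: if two consecutive runs $R_j,R_{j+1}$ had $\min R_j<\min R_{j+1}=c$, then since the descent between them forces $\max R_j>c$, there are adjacent entries $x<y$ inside $R_j$ with $x<c<y$; restricting to $[y]$ deletes exactly the larger entries lying between $y$ and $c$, so $x,y,c$ become adjacent and form a consecutive $132$ in the subword on $[y]$, violating the simsun condition.

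Finally I would match the statistics, all of which follow from ``blocks $=$ runs'' with block size $=$ run length. A right peak of $\pi$ is exactly the last entry of a run of length $\ge 2$, and an exterior double descent is exactly a run of length $1$ (checking the boundary conventions $\pi(0)=+\infty,\pi(n+1)=0$), giving $\rpk(\pi)=\nsi(\Phi(\pi))$ and $\exddes(\pi)=\singleton(\Phi(\pi))$; together with $\des(\pi)+1=\block(\Phi(\pi))$ this handles three of the four statistics. For the last one I would exhibit a bijection between inversions and free rises: because each run is increasing, every inversion $(a,c)$ of $\pi$ (with $a$ before $c$ and $a>c$) pairs entries of two different runs, and the decreasing-minima ordering turns it into the free rise $(c,a)$ of $\Phi(\pi)$, whose smaller entry $c$ lies in the earlier block in standard order; reversing the construction shows this correspondence is a bijection, so $\inv(\pi)=\fr(\Phi(\pi))$. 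Summing $x^{\block(p)}y^{\nsi(p)}z^{\singleton(p)}q^{\fr(p)}$ over the image of $\Phi$ then yields the claimed identity.
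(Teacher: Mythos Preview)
Your proposal is correct, and the bijection you construct coincides with the one in the paper: your map $\Psi$ is exactly the paper's $\varphi$ (reverse the block order in standard form and erase braces), and your $\Phi$ is its inverse. Where the two diverge is in how the bijection is \emph{established}. The paper builds its bijection inductively, labeling the admissible insertion slots in a permutation $\pi\in\spn_m(132)$ and in the matching partition, and then checking that inserting $m+1$ at corresponding labels preserves the correspondence and updates $\des$, $\inv$, $\block$, $\fr$ in parallel; only at the end does it observe that the closed form $\varphi$ describes the inverse. You instead argue directly: the reduction $\Psi(p)\!\restriction_{[k]}=\Psi(p\!\restriction_{[k]})$ lets you check simsun-$132$-avoidance at the top level only, and the contradiction argument showing that run minima of any $\pi\in\spn_n(132)$ strictly decrease gives $\Psi\circ\Phi=\mathrm{id}$ without induction. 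Your route is shorter and makes the statistic matches (particularly the inversion/free-rise correspondence) more transparent; the paper's labeled-insertion approach is longer but fits the context-free-grammar template that the authors use elsewhere.
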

In the following, we shall present a constructive proof of~\eqref{Stirling132}.

Recently, Chen et al.~\cite{Chen121} present a grammatical labeling of partitions of $[n]$:
For $p\in \prod_n$, we label a block of $p$ by letter $b$ and label the partition itself by letter $a$, and the weight of a partition is defined to be the product of its labels. Hence $w(p)=ab^k$ if $\block(p)=k$. They deduced that
$\sum_{p\in \prod_{n}}w(p)=a\sum_{k=0}^n\Stirling{n}{k}b^k$.
As a variant of the grammatical labeling, we introduce the following definition.
\begin{definition}
Suppose $p=B_1/B_2/\cdots/B_k$ is a partition of $[n]$. Then we label $B_i$ by letter $b_{k+1-i}$, where $1\leq i\leq k$.
Moreover, we put label $a$ at the end of $p$.
\end{definition}

Note that in order to get a permutation $\widehat{\pi}\in\spn_{n+1}(132)$ from $\pi\in\spn_n(132)$,
we can't insert the entry $n+1$ right after any ascent entry of $\pi$,
where an ascent entry is a value $\pi(i)$ such that $\pi(i)<\pi(i+1)$.
We now introduce a definition of labeled permutations.
\begin{definition}
Suppose $\pi\in \spn_n(132)$ with $k-1$ descents, where $1\leq k\leq n$. Let $i_1<i_2<\cdots<i_{k-1}$ be the descent indices of $\pi$.
We put superscript labels $s_r$ right after $\pi(i_r)$, where $1\leq r\leq k-1$. Moreover, we put superscript label $s_k$
at the end of $\pi$ and superscript label $t$
at the front of $\pi$.
\end{definition}

For example, the partition $\{1,3\}/\{2,4,5\}$ and the permutation $42315$ are labeled as follows:
$${\{1,3\}}_{b_2}/{\{2,4,5\}}_{b_1}a,~^{t}4^{s_1}23^{s_2}15^{s_3}.$$
For $1\leq k\leq n$ and $0\leq \ell\leq \binom{n}{2}$, we define $\prod_{n,k,\ell}=\{p\in \prod_n: \block(p)=k,\fr(p)=\ell\}$ and
$$\spn_{n,k,\ell}(132)=\{\pi\in \spn_n(132): \des(\pi)=k-1,\inv(\pi)=\ell\}.$$
In the following discussion, we always add labels to partitions and permutations.

Now we construct a bijection, denoted $\Psi$, between $\spn_{n,k,\ell}(132)$ and $\prod_{n,k,\ell}$.
When $n=1$, we have $\spn_{1,1,0}(132)=\{1\}$ and $\prod_{1,1,0}=\{\{1\}\}$.
The bijection between $\spn_{1,1,0}(132)$ and $\prod_{1,1,0}$ is given by
$$^{t}1^{s_1}\Leftrightarrow {\{1\}}_{b_1}a.$$
When $n=2$, if the entry 2 is inserted to the position of $^{t}1^{s_1}$ with label $t$, then we append the block $\{2\}$ to ${\{1\}}_{b_1}a$;
if the entry 2 is inserted to the position of $^{t}1^{s_1}$ with label $s_1$, then we insert the element 2 into the block ${\{1\}}$.
In other words, the bijection $\Psi$ is given by
\begin{align*}
^{t}2^{s_1}1^{s_2}&\Leftrightarrow \{1\}_{b_2}/\{2\}_{b_1}a;\\
^{t}12^{s_1}&\Leftrightarrow \{12\}_{b_1}a.
\end{align*}
It should be noted that the block with label $b_1$ consists of the elements lie before the label $s_1$, and
the block with label $b_2$ (if exists) consists of the elements lie between the labels $s_1$ and $s_2$.

Let $n=m$, where $m\geq 2$. Suppose $\Psi$ is a bijection between $\spn_{m,k,\ell}(132)$ and $\prod_{m,k,\ell}$.
Consider the case $n=m+1$. Suppose $\pi\in \spn_{m,k,\ell}(132)$ and $\widehat{\pi}$ is obtained from $\pi$ by inserting the entry $m+1$ into $\pi$.
Set $\Psi(\pi)=p$. Suppose further that the block of $p$ with label $b_1$ consists of the elements of $\pi$ lie before the label $s_1$, and for $1<i\leq k$,
the block of $p$ with label $b_i$ consists of the elements of $\pi$ lie between the labels $s_{i-1}$ and $s_i$.
Consider the following two cases:
\begin{enumerate}
  \item [\rm ($i$)] If the entry $m+1$ is inserted to the position of $\pi$ with label $t$, then we append the block $\{m+1\}$ to $p$.
  In this case, $\des(\widehat{\pi})=\des(\pi)+1=k$ and $\inv(\widehat{\pi})=\inv(\pi)+m=\ell+m$. Moreover,
  $\block(\Psi(\widehat{\pi}))=\block(p)+1=k+1$ and $\fr(\Psi(\widehat{\pi}))=\fr(p)+m=\ell+m$.
 \item [\rm ($ii$)] If the entry $m+1$ is inserted to the position of $\pi$ with label $s_i$, then we
 insert the element $m+1$ into the block of $p$ with label $b_i$. In this case,
 $\des(\widehat{\pi})+1=\block(\Psi(\widehat{\pi}))$ and $\inv(\widehat{\pi})=\fr(\Psi(\widehat{\pi}))$. More precisely,
 we distinguish two cases:
 \begin{enumerate}
  \item [\rm ($c_1$)] if $i=k$, then
 $\des(\widehat{\pi})=\des(\pi)=k-1,~\inv(\widehat{\pi})=\inv(p)=\ell,\block(\widehat{\pi})=\block(p)=k$ and $\fr(\widehat{\pi})=\fr(p)=\ell$.
  \item [\rm ($c_2$)] if $1\leq i<k$ and the label $s_i$ lies right after $\pi(j)$, then $\pi(j)>\pi(j+1)$.
  By the induction hypothesis, there are $m-j$ elements in the union of the blocks of $p$ with labels $b_{i+1},b_{i+2},\cdots,b_k$.
 Therefore, $\des(\widehat{\pi})=\des(\pi)=k-1,~\inv(\widehat{\pi})=\ell+m-j,\block(\widehat{\pi})=\block(p)=k$.
 and $\fr(\widehat{\pi})=\fr(p)+m-j=\ell+m-j$.
 \end{enumerate}
  \end{enumerate}
After the above step, we label the obtained permutations and partitions. It is clear that the block of $\Psi(\widehat{\pi})$ with label $b_1$ consists of the elements of $\widehat{\pi}$ lie before the label $s_1$, and for $1<i\leq k$,
the block of $\Psi(\widehat{\pi})$ with label $b_i$ consists of the elements of $\widehat{\pi}$ lie between the labels $s_{i-1}$ and $s_i$,
and the block of $\Psi(\widehat{\pi})$ with label $b_{k+1}$ (if exists) consists of the elements of
$\widehat{\pi}$ lie between the labels $s_k$ and $s_{k+1}$.
By induction, we see that $\Psi$ is the desired bijection between $\spn_{n,k,\ell}(132)$ and $\prod_{n,k,\ell}$ for all $k$ and $\ell$.
Using $\Psi$, we see that if $\pi(i)$ is a right peak of $\pi$,
then $\pi(i-1)$ and $\pi(i)$ are in the same block and $\pi(i)$ is the largest element of that block.
Moreover, if $\pi(i)$ is an exterior double descent of $\pi$, then $\{\pi(i)\}$ is a singleton of $\Psi(\pi)$.

Furthermore, we define a map $\varphi: \prod_{n}\rightarrow \spn_n(132)$ as follows: For $p=B_1/B_2/\ldots/B_k \in \prod_n$, let $p^r=B_k/B_{k-1}/\cdots/B_1$.
Let $\varphi(p)$ be a permutation obtained from $p^r$ by erasing all of the
braces of blocks and bars of $p^r$. For example, if $p=\{1\}/\{2,4\}/\{3,5,7\}/\{6\}$, then
$\varphi(p)=6357241$. Combining $\Psi$,
we see that $\varphi$ is also a bijection and $\spn_{n,k,\ell}(132)=\{\varphi(p): p\in \prod_{n,k,\ell}\}$.
It is clear that if $B_i$ is a non-singleton of $p$ with the largest element $m$, then $m$ is a right peak of $\varphi(p)$.
Moreover, if $\{c\}$ is a singleton of $p$, then $c$ is an exterior double descent of $\varphi(p)$.
In conclusion, using the bijections $\Psi$ and $\varphi$, we get a constructive proof of~\eqref{Stirling132}.

\begin{ex}
Given $\pi=42351\in \spn_{5,3,6}(132)$. The correspondence between $\pi$ and $\Psi(\pi)$ can be done if you proceed as follows:
\begin{align*}
^t1^{s_1}&\Leftrightarrow \{1\}_{b_1}a;\\
^{t}2^{s_1}1^{s_2}&\Leftrightarrow \{1\}_{b_2}/\{2\}_{b_1}a;\\
^{t}23^{s_1}1^{s_2}&\Leftrightarrow \{1\}_{b_2}/\{2,3\}_{b_1}a;\\
^{t}4^{s_1}23^{s_2}1^{s_3}&\Leftrightarrow \{1\}_{b_3}/\{2,3\}_{b_2}/\{4\}_{b_1}a;\\
^{t}4^{s_1}235^{s_2}1^{s_3}&\Leftrightarrow \{1\}_{b_3}/\{2,3,5\}_{b_2}/\{4\}_{b_1}a.
\end{align*}
\end{ex}

Let $B_n(x)=\sum_{k=0}^n\Stirling{n}{k}x^k$ be the Stirling polynomials.
Taking $y=z=q=1$ in~\eqref{Stirling132} leads to the following.
\begin{cor}
For $n\geq 1$, we have
$$B_n(x)=\sum_{\pi\in \spn_n(132)}x^{\des(\pi)+1}.$$
\end{cor}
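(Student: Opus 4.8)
The plan is to derive the corollary as the one-variable specialization of Theorem~\ref{thm04}, which I may assume. Setting $y=z=q=1$ in the identity~\eqref{Stirling132} strips every monomial of its $y$-, $z$- and $q$-factors, so the left-hand side collapses to $\sum_{\pi\in\spn_n(132)}x^{\des(\pi)+1}$ and the right-hand side collapses to $\sum_{p\in\prod_n}x^{\block(p)}$. The entire content of the corollary then reduces to the single claim that this latter sum equals $B_n(x)$.

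Identifying $\sum_{p\in\prod_n}x^{\block(p)}$ with $B_n(x)$ is the only step requiring an argument, and it is immediate: grouping the partitions of $[n]$ by their number of blocks gives
$$\sum_{p\in\prod_n}x^{\block(p)}=\sum_{k=0}^n\#\{p\in\prod_n:\block(p)=k\}\,x^k,$$
and, as recalled at the start of Section~\ref{Section-5}, the number of partitions of $[n]$ into exactly $k$ blocks is the Stirling number $\Stirling{n}{k}$. Hence the inner count is $\Stirling{n}{k}$, the sum equals $\sum_{k=0}^n\Stirling{n}{k}x^k=B_n(x)$, and combining the two specialized sides yields $B_n(x)=\sum_{\pi\in\spn_n(132)}x^{\des(\pi)+1}$.

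Should one prefer to avoid invoking the full four-variable Theorem~\ref{thm04}, I would give a short self-contained proof in the style of the earlier sections. Put $f(n,k)=\#\{\pi\in\spn_n(132):\des(\pi)=k-1\}$, so that $\sum_{\pi\in\spn_n(132)}x^{\des(\pi)+1}=\sum_{k}f(n,k)x^k$, and aim to show $f(n,k)=\Stirling{n}{k}$. Using the restriction recalled in Section~\ref{Section-5}, that $n+1$ cannot be placed directly after an ascent entry, the admissible insertion slots for $n+1$ into a $\pi$ with $\des(\pi)=k-1$ are exactly the $k+1$ labeled positions $t,s_1,\ldots,s_k$. Inserting at any of the $k$ slots $s_1,\ldots,s_k$ leaves the descent number equal to $k-1$, whereas inserting at the front slot $t$ raises it to $k$. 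Sorting these insertions by the descent count of the result gives
$$f(n+1,k)=k\,f(n,k)+f(n,k-1),$$
with $f(1,1)=1$ and $f(1,k)=0$ otherwise; these are precisely the recurrence and initial conditions defining $\Stirling{n}{k}$, so the two agree.

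I do not expect a genuine obstacle along either route. The specialization is pure bookkeeping once one notes that $\Stirling{n}{k}$ counts $k$-block partitions. In the self-contained version the only point demanding care is the descent-count verification: confirming that prepending $n+1$ adds exactly one descent while the remaining admissible insertions preserve the descent number and keep the result in $\spn_{n+1}(132)$.
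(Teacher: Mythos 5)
Your primary route is exactly the paper's proof: the corollary is stated there as the specialization $y=z=q=1$ of the identity~\eqref{Stirling132} in Theorem~\ref{thm04}, with the right-hand side collapsing to $\sum_{p\in\prod_n}x^{\block(p)}=\sum_{k=0}^n\Stirling{n}{k}x^k=B_n(x)$ just as you argue. Your optional self-contained alternative is also sound --- the $k+1$ admissible slots and the recurrence $f(n+1,k)=kf(n,k)+f(n,k-1)$ check out and match the Stirling recurrence --- and it essentially replays, in one variable, the same insertion argument the paper uses to build the bijection $\Psi$ proving Theorem~\ref{thm04} itself.
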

By using the reverse and complement maps, it is clear that
$$B_n(x)=\sum_{\pi\in \spn_n(231)}x^{\asc(\pi)+1}=\sum_{\pi\in \spn_n(312)}x^{\asc(\pi)+1}=\sum_{\pi\in \spn_n(213)}x^{\des(\pi)+1},$$
where $\asc(\pi)$ is the number of ascents of $\pi$, i.e., the number of indices $i$ such that $\pi(i)<\pi(i+1)$.

Using the bijection $\Psi$ and~\cite[p.~137, Exercise 108]{Sta11}, we get the following result.
\begin{prop}
The number of permutations in $\spn_n(132)$ with no successions is $B(n-1)$.
\end{prop}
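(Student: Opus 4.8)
The plan is to leverage the bijection $\varphi:\prod_n\to\spn_n(132)$ constructed above, which already matches the two families set-theoretically, and to show that it carries partitions with no successions precisely onto permutations with no successions. Recall that a succession in $\pi\in\msn$ is an index $i$ with $\pi(i+1)=\pi(i)+1$, while a succession in a partition of $[n]$ is an occurrence of two consecutive integers in a common block. Since $\varphi$ is a bijection onto $\spn_n(132)$, it suffices to prove that for $p\in\prod_n$ the permutation $\varphi(p)$ has a succession if and only if $p$ has one; the statement then follows immediately from~\cite[p.~137, Exercise~108]{Sta11}, which counts the succession-free partitions of $[n]$ by $B(n-1)$.

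For the easy direction I would first record the structure of $\varphi$: writing $p=B_1/B_2/\cdots/B_k$ in standard form, $\varphi(p)$ concatenates $B_k,B_{k-1},\ldots,B_1$, each block listed in increasing order. If $c$ and $c+1$ lie in the same block $B_j$, then, because $c+1$ is the least element of $[n]$ exceeding $c$, it is the immediate successor of $c$ within the increasingly-ordered block; hence $c$ is immediately followed by $c+1$ in $\varphi(p)$, producing a succession.

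The converse—that every succession of $\varphi(p)$ arises in this way—is the step I expect to require the most care, since a priori an adjacency $\pi(i),\pi(i+1)$ with $\pi(i+1)=\pi(i)+1$ could straddle a block boundary. An adjacency internal to a single block immediately yields a partition succession, so the real point is to rule out a succession at the junction between two consecutive blocks of $\varphi(p)$. Such a junction places $\max B_{j+1}$ immediately before $\min B_j$, and a succession there would force $\min B_j=\max B_{j+1}+1$. But the standard-form ordering gives $\min B_j<\min B_{j+1}\le\max B_{j+1}$, whence $\max B_{j+1}+1>\min B_j$, contradicting the required equality. Thus no block-boundary succession is possible, and every succession of $\varphi(p)$ is internal to some block, i.e.\ corresponds to a partition succession.

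Combining the two directions, $\varphi$ restricts to a bijection between the succession-free members of $\prod_n$ and the succession-free members of $\spn_n(132)$. Invoking the count $B(n-1)$ of succession-free partitions of $[n]$ then completes the argument. The only delicate point is the boundary analysis in the converse, where the standard-form inequality $\min B_j<\min B_{j+1}$ does all the work; everything else is a direct unwinding of the definition of $\varphi$.
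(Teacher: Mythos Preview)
Your proof is correct and follows the same approach the paper indicates: use the bijection $\varphi$ (equivalently $\Psi^{-1}$) between $\prod_n$ and $\spn_n(132)$ together with Stanley's count of succession-free partitions. The paper merely asserts the result as a consequence of $\Psi$ and \cite[p.~137, Exercise~108]{Sta11}; you have filled in the verification that the bijection matches partition successions with permutation successions, and your block-boundary analysis (where $\min B_j<\min B_{j+1}\le\max B_{j+1}$ forces a descent, hence no succession) is exactly the check needed.
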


Let $\pi=\pi(1)\pi(2)\cdots\pi(n)\in\msn$.
We say that an element $\pi(i)$ is a {\it left-to-right maximum} of $\pi$ if $\pi(i)>\pi(j)$ for every $j<i$. Let $\lrm(\pi)$
be the number of left-to-right maxima of $\pi$. For example, $\lrm({23}1{4})=3$.
Let $p=B_1/B_2/\cdots/B_k$.
Following~\cite{Sagan91}, we define $a_i$ to
be the number of $c\in B_i$ with $c> \min B_{i-1}$, where $2\leq i\leq k$.
Let
$$\widehat{Des}(p)=\{2^{a_2},3^{a_3},\ldots,k^{a_k}\}$$
be the {\it dual descent multiset} of $p$, where $i^d$ means that $i$ is repeated $d$ times.
For example, $\widehat{Des}(\{1,3,5\}/\{2\}/\{4,6,7\})=\{2^1,3^3\}$.
Let $\dudes(p)=\#\widehat{Des}(p)$. Using the bijections $\Psi$ and $\varphi$,
it is easy to verify the following result.
\begin{prop}
For $n\geq 1$, we have
$$\sum_{\pi\in \spn_n(132)}x^{\lrm(\pi)}=\sum_{p\in \prod_n}x^{n-\dudes(p)}.$$
\end{prop}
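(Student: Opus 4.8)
The plan is to simplify the right-hand side, transport the left-hand side to set partitions through the bijection $\varphi$, and then prove the resulting equidistribution on $\prod_n$. First I would note that, since partitions are written in standard form with $\min B_1<\min B_2<\cdots<\min B_k$, every $c\in B_i$ with $i\ge 2$ satisfies $c\ge \min B_i>\min B_{i-1}$, so each $a_i$ equals $|B_i|$. Hence $\dudes(p)=\sum_{i=2}^{k}|B_i|=n-|B_1|$, i.e. $n-\dudes(p)$ is just the size $|B_1(p)|$ of the block containing $1$. Choosing the other $|B_1|-1$ elements of that block from $\{2,\dots,n\}$ and partitioning the rest then gives
$$\sum_{p\in\prod_n}x^{n-\dudes(p)}=\sum_{p\in\prod_n}x^{|B_1(p)|}=\sum_{j=1}^{n}\binom{n-1}{j-1}B(n-j)\,x^{j}.$$

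For the left-hand side, since $\varphi\colon\prod_n\to\spn_n(132)$ is a bijection, it equals $\sum_{p\in\prod_n}x^{\lrm(\varphi(p))}$. Writing $\varphi(p)=B_kB_{k-1}\cdots B_1$ with each block listed increasingly, an entry $c\in B_i$ is a left-to-right maximum of $\varphi(p)$ exactly when $c$ exceeds every entry of $B_{i+1}\cup\cdots\cup B_k$ (the entries of $B_i$ before $c$ are automatically smaller); call this count $f(p)$, so $\lrm(\varphi(p))=f(p)$. Thus the theorem is equivalent to the equidistribution $\sum_p x^{f(p)}=\sum_p x^{|B_1(p)|}$ over $\prod_n$. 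It is natural to aim for the finer statement that $(f,\block)$ and $(|B_1|,\block)$ are equidistributed, i.e. $\sum_p x^{f(p)}y^{\block(p)}=\sum_p x^{|B_1(p)|}y^{\block(p)}$, which is consistent with the fact that $\varphi$ already sends $\block(p)$ to $\des+1$.

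For the $|B_1|$ side this joint generating function obeys a clean recurrence obtained by inserting the largest letter into a partition of $[n-1]$: placing $n$ in the block of $1$ raises $|B_1|$ by one, while placing it in any other block, or as a new singleton block, leaves $|B_1|$ fixed and adjusts $\block$; this yields a transparent induction. The main obstacle is the $f$ side. Inserting the new maximum into $\varphi(p)$ changes $f$ by one plus the number of records contained in an initial segment of the runs of $\varphi(p)$, so the naive insertion recurrence for $f$ depends on the entire \emph{record profile} of $p$, not on $f$ alone; correspondingly, $\varphi$ (and likewise $\Psi$) matches $\lrm$ with $|B_1|$ only in distribution, not value by value. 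I would therefore close the argument in one of two ways: either by exhibiting a bijection on $\prod_n$ that interchanges $f$ and $|B_1|$ and preserves $\block$ — equivalently, establishing the symmetry of $\sum_p x^{f(p)}y^{|B_1(p)|}$, which I have verified for small $n$ — or, absent a transparent involution, by proving the explicit refinement
$$\#\{p\in\prod_n:\ f(p)=j,\ \block(p)=k\}=\binom{n-1}{j-1}\Stirling{n-j}{k-1}$$
by induction on $n$. Verifying this count for $f$ is the technical heart of the proof; once it is in hand, summing over $k$ recovers $\binom{n-1}{j-1}B(n-j)$ and the theorem follows.
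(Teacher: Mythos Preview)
Your reductions are correct and in fact sharper than anything the paper offers. The paper's ``proof'' is the single sentence ``Using the bijections $\Psi$ and $\varphi$, it is easy to verify the following result,'' so there is essentially nothing to compare against. Your observation that $n-\dudes(p)=|B_1(p)|$ is right, and your key remark that $\varphi$ (equivalently $\Psi^{-1}$) does \emph{not} match $\lrm$ with $|B_1|$ pointwise is also right: e.g.\ for $p=\{1\}/\{2,4\}/\{3,5,7\}/\{6\}$ one has $\varphi(p)=6357241$ with $\lrm=2$ but $|B_1|=1$. So the paper's hand-wave is not a proof, and your plan to argue equidistribution is the honest route.

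That said, your proposal stops exactly at the hard step. You state the target refinement
\[
\#\{p\in\textstyle\prod_n:\ f(p)=j,\ \block(p)=k\}=\binom{n-1}{j-1}\Stirling{n-j}{k-1},
\]
note that the insertion recursion for $f=\lrm\circ\varphi$ depends on the full record profile, and then defer the verification. This is a genuine gap: neither the involution nor the induction is supplied, and (as you yourself saw) the obvious candidates fail. For instance, the map ``record the set of left-to-right maxima and the relabelled complement'' is not a bijection to $\binom{[n-1]}{j-1}\times\prod_{n-j}$: for $n=4$, $j=2$, $k=3$ all three permutations $3214,3241,3421$ share the same record set $\{3,4\}$ and the same reduced word $21$. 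So whatever bijection realises the formula must use finer data than the record set; finding it (or an inductive proof that tracks enough of the record profile to close the recursion) is the missing ingredient. Until that is done, the argument is an outline rather than a proof.
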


Let $D(\pi)=\{i: \pi(i)>\pi(i+1)\}$ be the descent set of $\pi$.
The {\it major index} of $\pi$ is the sum of the descents: $\maj(\pi)=\sum_{i\in D(\pi)}i$.
Along the same lines as the proof of~\cite[Eq.~(1.41)]{Sta11}, it is routine to check that
$$\sum_{\pi\in \spn_n(132)}x^{\inv(\pi)}=\sum_{\pi\in \spn_n(132)}x^{\binom{n}{2}-\maj(\pi)}.$$



\end{document}